\documentclass[11pt,a4paper]{article}
\usepackage[T1]{fontenc}
\usepackage[utf8]{inputenc}
\usepackage{lmodern}
\usepackage{microtype}
\usepackage[margin=28mm]{geometry}
\usepackage{amsmath,amssymb,amsthm,mathtools}
\usepackage{enumitem}
\usepackage{xcolor}
\usepackage{hyperref}
\usepackage{bookmark}
\hypersetup{colorlinks=true,linkcolor=blue!45!black,citecolor=blue!45!black,
  urlcolor=blue!45!black,
  pdftitle={},pdfauthor={},pdfsubject={},pdfkeywords={},
  pdfcreator={},pdfproducer={}}
\ifdefined\pdfinfoomitdate\pdfinfoomitdate=1\fi
\ifdefined\pdftrailerid\pdftrailerid{}\fi
\ifdefined\pdfsuppressptexinfo\pdfsuppressptexinfo=15\fi
\numberwithin{equation}{section}
\newtheorem{theorem}{Theorem}[section]
\newtheorem{proposition}[theorem]{Proposition}
\newtheorem{lemma}[theorem]{Lemma}
\newtheorem{corollary}[theorem]{Corollary}
\theoremstyle{remark}

\DeclareMathOperator{\Li}{Li}
\DeclareMathOperator{\rad}{rad}
\newcommand{\OO}{\mathcal O}
\newcommand{\PP}{\mathcal P}
\newcommand{\WW}{\mathcal W}
\newcommand{\SeqA}{\mathcal A}
\newcommand{\BB}{\mathcal B}
\newcommand{\one}{\mathbf 1}
\newcommand{\fa}{\mathfrak a}
\newcommand{\fb}{\mathfrak b}
\newcommand{\ff}{\mathfrak f}
\newcommand{\fm}{\mathfrak m}
\newcommand{\fl}{\mathfrak l}
\newcommand{\fq}{\mathfrak q}
\setlist{nosep}
\title{Divisibility Estimates for Gaussian Norms and\\ Almost-Prime Orders of a CM Elliptic Curve}
\author{SeungJu Lee}
\date{\today}

\begin{document}
\maketitle
\begin{abstract}
Let \(E:y^2=x^3-x\). We prove that
\(\#E(\mathbf F_p)/8\) is squarefree and has at most two prime factors
for \(\gg X/(\log X)^2\) primes \(p\le X\).
The proof follows the Gaussian CM sieve framework of Iwaniec and
Jim\'enez Urroz and incorporates the intersection terms identified
by Xie. For a squarefree Gaussian ideal \(\fa\), we
separate its largest rational ideal factor and use the resulting
divisibility of both integral coordinates to control the large-factor
tail. This transfers an ideal Bombieri--Vinogradov estimate to
rational norm divisibility at level \(1/2\), with any fixed power
of the divisor function as a weight. We prove the corresponding
distribution estimate
for the switched product sequence and apply the transfer to both
sequences in Chen's weighted sieve. The primes counted may be
restricted to \(p\equiv5\pmod8\); their normalized orders are odd,
squarefree, and supported on primes congruent to \(1\pmod4\).
\end{abstract}

\section{Introduction}

The study of prime and almost-prime orders of reductions of elliptic
curves is motivated by Koblitz's conjecture \cite{Koblitz1988}.
An early unconditional result for CM curves is Cojocaru's \(P_5\)
theorem for curves with maximal CM order and trivial rational torsion
throughout their rational isogeny class \cite[Theorem~4]{Cojocaru2005}.
Here \(P_r\) denotes a positive integer with at most \(r\) prime factors,
counted with multiplicity.

We consider the fixed elliptic curve
\[
 E:\ y^2=x^3-x.
\]
For a prime \(p\equiv1\pmod4\), its order is divisible by \(8\), and
we write
\[
 n_p=\frac{\#E(\mathbf F_p)}8.
\]
The factor \(8\) arises from the primary Frobenius congruence in
\(\mathbf Z[i]\), which will also give \(n_p\) as a Gaussian norm.

\begin{theorem}\label{thm:main}
There is a constant \(c_E>0\) such that, for all sufficiently large
\(X\),
\[
 \#\left\{p\le X:
 \begin{array}{l}
 p\text{ prime},\quad p\equiv5\pmod8,\\
 n_p\text{ squarefree},\quad \Omega(n_p)\le2
 \end{array}\right\}
 \ge c_E\frac{X}{(\log X)^2}.
\]
The primes counted may be chosen so that every prime divisor of
\(n_p\) is at least \(X^{1/8}\). Here \(\Omega(n)\) counts prime
factors with multiplicity, with \(\Omega(1)=0\).
\end{theorem}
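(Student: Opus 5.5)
The plan is to run Chen's weighted sieve on the sequence \(\SeqA=\{n_p:\ p\le X,\ p\equiv5\pmod8\}\), with \(n_p\) realised as a Gaussian norm. For \(p\equiv1\pmod4\) write \(p=\pi\bar\pi\) with \(\pi\) primary, so that \(a_p=\pi+\bar\pi\) up to the quartic-character normalisation and \(\#E(\mathbf F_p)=N(\pi-1)\). The primary condition forces \((1+i)^3\mid\pi-1\), hence \(n_p=N\bigl((\pi-1)/(1+i)^3\bigr)\). For \(p\equiv5\pmod8\) the quotient \((\pi-1)/(1+i)^3\) is coprime to \(1+i\), so \(n_p\) is odd. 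Thus \(\SeqA\) is the image under \(\fa\mapsto N\fa\) of shifted Gaussian primes \(\fa=((\pi-1)/(1+i)^3)\), with \(\pi\) in a fixed residue class modulo a power of \(1+i\).

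For a rational prime \(q\equiv3\pmod4\), \(q\mid n_p\) forces \(q\mid\pi-1\) as Gaussian integers, which gives \(q^2\mid n_p\). These primes therefore sieve at density about \(1/q^2\), and removing them costs only a constant factor. The sieve dimension is \(1\), coming from the split primes \(\ell\equiv1\pmod4\) with \(\ell\mid n_p\) iff \(\pi\equiv1\) modulo one of the two primes above \(\ell\). This gives local density \(\approx 2/\ell\) on one side, while the Gaussian primes \(\pi\) carry density \(\approx1/\ell\) per ideal. I would verify that the standard one-dimensional sieve inputs hold with a singular series \(\mathfrak S_E>0\).

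The level-of-distribution input is the transfer described in the abstract. The ideal Bombieri--Vinogradov theorem for Gaussian primes in progressions modulo ideals \(\fa\) gives level \(N\fa\le X^{1/2-\epsilon}\). To convert rational divisibility \(d\mid n_p\) with \(d\) squarefree into ideal conditions, I would write the set of \(\fa\) with \(N\fa\) divisible by \(d\) as a union over \(\fa=\fb\cdot(m)\), with \((m)\) the largest rational ideal factor. The terms with large \(m\) are handled by noting that \(m\) divides both coordinates of \(\pi-1\). Their contribution is then bounded trivially with \(\tau^k\) weights, using the earlier divisibility estimate. This yields a rational Bombieri--Vinogradov bound for \(\SeqA\) at level \(X^{1/2-\epsilon}\), valid with divisor-power weights, as the weighted sieve requires.

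Next comes the Chen switch. I would apply the weighted sieve with sifting parameter \(z=X^{1/8}\) and the Chen weight subtracting the contribution of \(n_p=q_1q_2q_3\) with \(X^{1/8}\le q_1<X^{1/3.\ldots}\le q_2\). These triple products are counted by switching roles: count Gaussian integers \(\beta\) with \(N\beta=q_1q_2q_3\) in the specified ranges such that \((1+i)^3\beta+1\) is a primary prime. This is the switched product sequence, sifted by primes \(\pi\) of norm up to \(X^{1/2-\epsilon}\). For it I would use the paper's distribution estimate together with the same transfer lemma. Xie's intersection terms, where \(\beta\) and its conjugate share rational factors, must be removed explicitly. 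Then I would run Chen's numerics: a lower bound \(S(\SeqA,z)\ge(\text{const})X/(\log X)^2\) from the linear sieve at level \(1/2\), minus the weighted upper bounds for \(\sum_q S(\SeqA_q,\cdot)\) and for the switched sum. This should leave a positive multiple of \(X/(\log X)^2\) for the \(n_p\) with \(\Omega\le2\) and all prime factors at least \(X^{1/8}\). Squarefreeness follows by discarding the \(n_p\) divisible by \(q^2\) with \(q\ge X^{1/8}\), which number \(O(X^{1-1/8+\epsilon})\).

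The main obstacle is the transfer step at level exactly \(1/2\), keeping the large-\(m\) tail under control with \(\tau^k\) weights uniformly, and the analogous estimate for the switched sequence. The final numerical positivity in Chen's inequality at level \(1/2\), with \(z=X^{1/8}\), is classical but needs to be checked against these constants.
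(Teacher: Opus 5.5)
Your proposal is correct and follows essentially the paper's route. The paper runs Chen's weighted sieve with $z=X^{1/8}$ and $y=X^{1/3}$, so positivity reduces to $c=\int_{1/8}^{1/3}\frac{\log(2-3v)}{v(1-v)}\,dv<\log\frac32$. It transfers from ideal to rational moduli by splitting off the largest rational factor $(c)$ of each squarefree divisor ideal, and it controls large $c$ through $c\mid\gcd(\Re\alpha,\Im\alpha)$. It treats the switched sequence by its ideal distribution estimate plus the same transfer.

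Your two departures are harmless. You impose $p\equiv5\pmod8$ at the outset, whereas the paper deduces it at the end from $z$-roughness, since $n_p$ is odd exactly when $p\equiv5\pmod8$. You also work at level $X^{1/2-\epsilon}$; the resulting loss is continuous, and the margin in $\log\frac32-c$ absorbs it.

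One correction concerns the intersection terms. They are not something to remove, and $\beta,\overline\beta$ are already coprime because $N\beta$ is squarefree. They are the terms $-\one_{(\ell)\mid(\alpha)}$ for the shifted element $\alpha$ in $\one_{\ell\mid N\alpha}=\one_{\fl\mid(\alpha)}+\one_{\overline{\fl}\mid(\alpha)}-\one_{(\ell)\mid(\alpha)}$. Your ``union'' must therefore be the signed sum $\mu(d)\sum_{\rad(N\fa)=d}\mu_{\OO}(\fa)\one_{\fa\mid(\alpha)}$, and the large-content part of that sum is exactly what your tail argument controls.
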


\subsection{Previous work and norm divisibility}

Iwaniec and Jim\'enez Urroz \cite[Theorem~1.2]{IwaniecJimenezCM}
stated the squarefree \(P_2\) lower bound for \(n_p\), with
\(p\equiv1\pmod4\). Their proof combines the linear sieve with
Chen's switching principle in \(\mathbf Z[i]\).
Jim\'enez Urroz \cite[Theorem~1 and \S4]{JimenezUrroz2008}
formulated a squarefree \(P_2\) result for CM elliptic curves over
\(\mathbf Q\) with maximal CM order, at split primes and after
dividing the group order by a curve-dependent fixed divisor.

Xie \cite[Remark~4.1, p.~28]{XieCM2025} identified an omitted
intersection in the first displayed equality on p.~823 of
\cite{IwaniecJimenezCM}. If a rational prime \(\ell\equiv1\pmod4\)
splits as \((\ell)=\fl\overline{\fl}\), then
\begin{equation}\label{eq:split-union}
 \one_{\ell\mid N\alpha}
 =\one_{\fl\mid(\alpha)}
  +\one_{\overline{\fl}\mid(\alpha)}
  -\one_{(\ell)\mid(\alpha)}.
\end{equation}
The corresponding remainder is therefore
\[
 R(\fl)+R(\overline{\fl})-R((\ell)).
\]
The reindexing on that page omits the last term. The rational local
density in the same paper includes its main-term
counterpart,
\[
 g(\ell)=\frac{2}{\ell-1}-\frac{1}{(\ell-1)^2}.
\]
Thus the issue lies in the summation of remainders. It affects the
derivation of (3.8) and (3.9) there, and the analogous conversion
for the switched sequence in (5.9), p.~830.
For example, with \(b=2(1+i)\), the primary Gaussian prime
\(\pi=-9+10i\) has norm \(181\) and
\((\pi-1)/b=5i\). Both conjugate ideals above \(5\) divide this
quotient, so the intersection in \eqref{eq:split-union} occurs
in the sequence under consideration.

Xie's remainder formula
\cite[(4.13)]{XieCM2025} retains all squarefree ideals
\(\fa\) with \(\rad(N\fa)=d\). Such ideals satisfy
\(d\le N\fa\le d^2\). Applying an ideal distribution estimate
up to norm \(X^{1/2}/(\log X)^B\) through this last upper bound
gives rational moduli up to \(X^{1/4}\), with a logarithmic loss.
This yields the squarefree \(P_5\) result in
\cite[Corollary~4.2]{XieCM2025}. We use the factorization below
to estimate the terms of large ideal norm and obtain the
rational norm-divisibility estimate at level \(1/2\).

\subsection{The distribution estimates}

The central step is a transfer theorem for Gaussian norms.
A squarefree ideal has a unique decomposition
\[
 \fa=(c)\fb,\qquad
 N\fa=c^2N\fb,\qquad
 \rad(N\fa)=cN\fb,
\]
where \(c\) is a positive odd squarefree rational integer and \(N\fb\)
is squarefree. If \(\fa\mid(\alpha)\), then \(c\) divides both
coordinates of \(\alpha\).
For \(c\le(\log X)^L\), an ideal norm cutoff costs only a fixed
power of \(\log X\). For larger \(c\), lattice counting and
divisor moments give an arbitrarily strong logarithmic saving
after choosing \(L\).
Theorem~\ref{thm:transfer} consequently retains level \(1/2\),
including weights \(\tau(d)^J\) for every fixed integer \(J\ge0\)
on rational moduli.

We apply this result first to
\[
 N\left(\frac{\pi-1}{2(1+i)}\right)
\]
for primary Gaussian primes \(\pi\), and then to \(N(1+\omega)\),
where \(\omega\) is a prescribed product of three primary
Gaussian primes times \(2(1+i)\) and a unit.
The distribution of the product sequence is established in
Theorem~\ref{thm:switching} using Gaussian prime distribution
and the multiplicative large sieve.
These two rational-divisibility estimates supply the two
distribution inputs for the weighted sieve.
The \(P_2\) conclusion, its local densities, and the CM form of
Chen's weight already appear in \cite{IwaniecJimenezCM}.
Here we establish the norm transfer and apply it to both
sequences.
In a primitive Gaussian setting, an ideal-to-norm sieve passage
also appears in \cite[Lemma~5.3 and \S5.1]{Merikoski2025}.
The transfer below permits nontrivial rational content and
controls the additional intersection terms.

\section{Divisibility of Gaussian norms}

Put \(\OO=\mathbf Z[i]\), \(N\alpha=\alpha\overline\alpha\), and
let \(N\fa=[\OO:\fa]\) for a nonzero integral ideal.
Write \(\Phi(\fa)=|(\OO/\fa)^\times|\),
\(\tau(n)=\sum_{d\mid n}1\), and denote the rational and ideal
M\"obius functions by \(\mu\) and \(\mu_{\OO}\), respectively.
Sums containing the factor \(\mu(d)^2\) are understood to run
only over squarefree positive integers \(d\).
An ideal is called odd if it is coprime to \((1+i)\).
For a squarefree ideal define
\[
 d(\fa)=\rad(N\fa),\qquad c(\fa)=\frac{N\fa}{d(\fa)}.
\]
Squarefreeness of an ideal refers to its prime ideal factors;
both conjugate factors above a split rational prime are allowed.

\begin{lemma}\label{lem:content}
Every squarefree ideal \(\fa\) has a unique factorization
\[
 \fa=(c)\fb,
\]
where \(c=c(\fa)\) is a positive odd squarefree rational integer,
\((c)\) and \(\fb\) are coprime, and \(N\fb\) is squarefree.
Moreover,
\[
 d(\fa)=cN\fb,\qquad N\fa=c^2N\fb.
\]
If \(\fa\mid(\alpha)\), then
\(c\mid\gcd(\Re\alpha,\Im\alpha)\).
\end{lemma}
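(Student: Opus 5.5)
The proof goes through the prime factorization of \(\fa\) in \(\OO=\mathbf Z[i]\), sorting the prime ideals by the splitting type of the rational prime below them. Since \(\fa\) is squarefree, write
\[
 \fa=(1+i)^{e}\prod_{q\in Q}(q)\prod_{\ell\in S}\fl\overline{\fl}\prod_{\ell\in T}\fl_\ell ,
\]
with \(e\in\{0,1\}\) and pairwise disjoint sets. Here \(Q\) is a set of inert primes \(q\equiv3\pmod4\), for which \((q)\) is prime of norm \(q^2\). The set \(S\) consists of split primes \(\ell\equiv1\pmod4\) both of whose conjugate factors divide \(\fa\). The set \(T\) consists of split primes for which exactly one factor \(\fl_\ell\in\{\fl,\overline{\fl}\}\) divides \(\fa\).

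We then set
\[
 c=\prod_{q\in Q}q\prod_{\ell\in S}\ell,\qquad
 \fb=(1+i)^{e}\prod_{\ell\in T}\fl_\ell .
\]
Since \(\fl\overline{\fl}=(\ell)\), we get \(\fa=(c)\fb\). The integer \(c\) is odd, positive and squarefree, because the primes involved are distinct and odd. No prime ideal dividing \(c\) divides \(\fb\), so \((c)\) and \(\fb\) are coprime. Finally \(N\fb=2^{e}\prod_{\ell\in T}\ell\) is squarefree.

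The norm identities are direct computations. On one side,
\[
 N\fa=\prod_{q\in Q} q^2\prod_{\ell\in S}\ell^2\cdot 2^{e}\prod_{\ell\in T}\ell=c^2N\fb .
\]
On the other, \(\rad(N\fa)\) is the product of all rational primes lying below prime ideals of \(\fa\). These primes are distinct across \(Q\), \(S\), \(T\) and \(\{2\}\), so \(\rad(N\fa)=cN\fb\). This gives \(d(\fa)=cN\fb\), and hence \(c(\fa)=N\fa/d(\fa)=c\), so the integer \(c\) constructed here agrees with the paper's \(c(\fa)\).

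Uniqueness is the step needing the most care, though it is still short. Suppose \(\fa=(c')\fb'\) with the stated properties. Then \(c'\) is odd and squarefree, and each prime \(p\mid c'\) contributes \((p)\) to \(\fa\). If \(p\equiv3\pmod4\) this means \(p\in Q\); if \(p\equiv1\pmod4\) it means both \(\fl,\overline{\fl}\) divide \(\fa\), so \(p\in S\). Hence \(c'\mid c\). Conversely, suppose some prime of \(Q\cup S\) did not divide \(c'\). Then its contribution to \(\fa\) would lie in \(\fb'\). For \(q\in Q\) this gives \(q^2\mid N\fb'\), and for \(\ell\in S\) it gives \(\ell^2\mid N\fb'\), contradicting squarefreeness of \(N\fb'\). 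Hence \(c'=c\), and \(\fb'=\fb\) follows by cancelling the ideal \((c)\) in the Dedekind domain \(\OO\). Alternatively, uniqueness of \(c\) follows at once from \(c=N\fa/\rad(N\fa)\), which holds for any such factorization by the norm identities.

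For the last claim, suppose \(\fa\mid(\alpha)\). Then \((c)\mid(\alpha)\), so \(\alpha=c\beta\) with \(\beta\in\OO\). Writing \(\beta=u+vi\) gives \(\Re\alpha=cu\) and \(\Im\alpha=cv\). Hence \(c\mid\gcd(\Re\alpha,\Im\alpha)\).
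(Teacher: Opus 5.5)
Your proof is correct and follows the same route as the paper: sort the prime ideals of \(\fa\) by splitting type, put the inert primes and the split primes with both conjugates into \(c\), and put the ramified prime and the lone split factors into \(\fb\). You simply write out the norm computations and the uniqueness argument that the paper leaves to ``unique factorization of ideals.''
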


\begin{proof}
At a split odd prime, include \(\ell\) in \(c\) if both prime
ideals above \(\ell\) divide \(\fa\); if exactly one of them
divides \(\fa\), place that prime ideal in \(\fb\).
An inert prime ideal \((\ell)\), of norm \(\ell^2\), contributes
\(\ell\) to \(c\) when it divides \(\fa\).
The possible single factor \((1+i)\) is placed in \(\fb\).
Unique factorization of ideals proves all the assertions.
\end{proof}

\begin{lemma}\label{lem:content-tail}
For fixed \(C>0\) and integer \(J\ge0\), there is a constant
\(L_J\) such that, for \(X\ge3\) and \(Z\ge1\),
\[
 \sum_{\substack{\alpha\in\OO,\ 0<N\alpha\le CX\\
                  \gcd(\Re\alpha,\Im\alpha)>Z}}
       \tau(N\alpha)^J
 \ll_{C,J} XZ^{-1/2}(\log X)^{L_J}.
\]
The greatest common divisor is positive, including when one
coordinate is zero.
\end{lemma}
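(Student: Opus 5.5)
We write \(\alpha=g\beta\) with \(g=\gcd(\Re\alpha,\Im\alpha)\) a positive integer and \(\beta\) primitive (coordinates coprime). Then \(N\alpha=g^2N\beta\) and \(N\beta\le CX/g^2\). Since \(\tau\) is submultiplicative, \(\tau(N\alpha)\le\tau(g^2)\tau(N\beta)\le\tau(g)^2\tau(N\beta)\). Raising to the \(J\)-th power and dropping primitivity, the sum is at most
\[
 \sum_{g>Z}\tau(g)^{2J}\sum_{\substack{\beta\in\OO\\ 0<N\beta\le CX/g^2}}\tau(N\beta)^J .
\]
The inner sum is empty unless \(g\le\sqrt{CX}\), so throughout \(\log g\ll\log X\).

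The key step is to bound the inner sum by a lattice-point moment. Grouping by \(n=N\beta\) and using \(r_2(n)\le4\tau(n)\), we get
\[
 \sum_{0<N\beta\le Y}\tau(N\beta)^J\le4\sum_{n\le Y}\tau(n)^{J+1}\ll_J Y(\log 3Y)^{2^{J+1}-1},
\]
valid for \(Y\ge1\). This is the standard divisor moment \(\sum_{n\le Y}\tau(n)^k\ll Y(\log 3Y)^{2^k-1}\). For \(Y<1\) the sum is empty. With \(Y=CX/g^2\) the inner sum is therefore
\[
 \ll_{C,J} \frac{X}{g^2}(\log X)^{2^{J+1}-1}.
\]

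It remains to bound \(\sum_{g>Z}\tau(g)^{2J}g^{-2}\). The crude bound \(\tau(g)\ll_\varepsilon g^{\varepsilon}\) would already give \(\ll Z^{-1/2}\), with room to spare. To keep the constant clean, we instead use partial summation from \(\sum_{g\le T}\tau(g)^{2J}\ll T(\log 3T)^{4^J-1}\). This gives
\[
 \sum_{g>Z}\tau(g)^{2J}g^{-2}\ll Z^{-1}(\log 3Z)^{4^J-1}.
\]
Since only \(g\le\sqrt{CX}\) contribute, we may assume \(Z\le\sqrt{CX}\), and then this is \(\ll Z^{-1/2}(\log X)^{4^J-1}\). (If \(Z>\sqrt{CX}\), the sum is empty.)

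Combining the two bounds gives the result with \(L_J=2^{J+1}-1+4^J\). The exponent \(Z^{-1/2}\) is weaker than the \(Z^{-1}\) the argument actually gives, so nothing is tight. The only point needing care is the submultiplicativity of \(\tau\) across the non-coprime factors \(g^2\) and \(N\beta\); we handle it with the elementary inequality \(\tau(mn)\le\tau(m)\tau(n)\). The zero-coordinate convention causes no issue, since then \(g=|\alpha|\) and \(\beta\) is a unit.
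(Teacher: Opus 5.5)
Your proof is correct, but it takes a different route from the paper.

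The paper never factors out the content. It bounds the number of \(\alpha\) with \(\gcd(\Re\alpha,\Im\alpha)>Z\) by \(\sum_{g>Z}\#\{\alpha:\ g\mid\alpha,\ 0<N\alpha\le CX\}\ll X/Z\), using an unweighted lattice count. Separately, it bounds the unrestricted moment \(\sum_{0<N\alpha\le CX}\tau(N\alpha)^{2J}\ll X(\log X)^{L'}\) via \eqref{eq:divisor-moment}. It then combines the two by Cauchy's inequality, and that square root is exactly where the exponent \(Z^{-1/2}\) in the statement comes from.

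You instead write \(\alpha=g\beta\) and use \(\tau(g^2N\beta)\le\tau(g)^2\tau(N\beta)\) to split the weight, so the gcd restriction and the divisor weight are handled together. This costs you a partial-summation estimate for \(\sum_{g>Z}\tau(g)^{2J}g^{-2}\). In return it gives the stronger bound \(XZ^{-1}(\log X)^{L_J}\).

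The paper's version is shorter because it reuses the moment bound it needs anyway for Theorem~\ref{thm:transfer}. The lost square root is harmless there, since \(Z=(\log X)^L\) with \(L\) free to choose. Your sharper exponent would only permit a smaller choice of \(L\) and changes nothing downstream.
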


\begin{proof}
For \(R\ge1\), a disk of radius \(R\) contains \(O(R^2)\)
nonzero Gaussian integers. Hence the number of elements in
the indicated sum is at most
\[
 \sum_{Z<g\le\sqrt{CX}}
       \#\{\alpha:0<N\alpha\le CX,\ g\mid\alpha\}
 \ll_C X\sum_{g>Z}g^{-2}\ll_C X/Z.
\]
Write \(r_2(n)=\#\{\alpha\in\OO:N\alpha=n\}\) for \(n\ge1\).
The sum-of-two-squares formula gives \(r_2(n)\le4\tau(n)\).
For any fixed nonnegative integer \(t\), it follows that
\begin{equation}\label{eq:divisor-moment}
 \sum_{0<N\alpha\le CX}\tau(N\alpha)^t
 \le4\sum_{n\le CX}\tau(n)^{t+1}
 \ll_{C,t}X(\log X)^{L'_t}.
\end{equation}
One may use \(\tau(n)^k\le\tau_{2^k}(n)\) and the elementary
bound \(\sum_{n\le U}\tau_j(n)\ll_j U(1+\log U)^{j-1}\).
Here \(U\ge1\), and \(\tau_j(n)\) counts ordered \(j\)-tuples
of positive integers with product \(n\).
The first inequality follows prime by prime by encoding the
exponent levels of a \(k\)-tuple of divisors by the subsets
containing each level. Cauchy's inequality now proves the lemma.
\end{proof}

\begin{theorem}[Transfer at level \(1/2\)]\label{thm:transfer}
For \(X\ge3\), let \(m_X(\alpha)\) be a nonnegative weight on
nonzero Gaussian integers, supported on \(N\alpha\le CX\), with
\[
 m_X(\alpha)\ll\tau(N\alpha)^M
\]
for fixed \(C>0\) and a nonnegative integer \(M\). Suppose
\[
 A_X(\fa):=\sum_{\fa\mid(\alpha)}m_X(\alpha)
       =Y_X\gamma_X(\fa)+R_X(\fa),
\]
where \(0\le Y_X\ll X(\log X)^M\), \(\gamma_X(\fa)\ge0\), and
\[
 \gamma_X(\fa)\ll\frac{\tau(N\fa)^{B_0}}{N\fa}
\]
for a fixed nonnegative integer \(B_0\).
Assume that for every \(A>0\) some
\(B=B(A)\) satisfies
\begin{equation}\label{eq:ideal-hypothesis}
 \sum_{N\fa\le\sqrt X/(\log X)^B}|R_X(\fa)|
 \ll_A\frac{X}{(\log X)^A}.
\end{equation}
For squarefree positive integers \(d\), put
\[
 A_X[d]=\sum_{d\mid N\alpha}m_X(\alpha),\qquad
 g_X(d)=\mu(d)\sum_{d(\fa)=d}
                \mu_{\OO}(\fa)\gamma_X(\fa),
\]
where the inner sum is over squarefree ideals.
Then for every \(A>0\) and integer \(J\ge0\) some \(B=B(A,J)\)
satisfies
\begin{equation}\label{eq:transfer-conclusion}
 \sum_{d\le\sqrt X/(\log X)^B}\mu(d)^2\tau(d)^J
       |A_X[d]-Y_Xg_X(d)|
 \ll_{A,J}\frac{X}{(\log X)^A}.
\end{equation}
Constants may depend on the fixed parameters in the hypotheses.
\end{theorem}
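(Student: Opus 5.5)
We start from the inclusion–exclusion identity that generalizes \eqref{eq:split-union}. For squarefree \(d\), \(d\mid N\alpha\) holds exactly when, for every prime \(\ell\mid d\), some prime ideal above \(\ell\) divides \((\alpha)\). Möbius inversion over the ideals dividing \(\prod_{\ell\mid d}(\ell)\) with all prime factors above primes of \(d\) then gives
\[
 \one_{d\mid N\alpha}=\mu(d)\sum_{d(\fa)=d}\mu_{\OO}(\fa)\one_{\fa\mid(\alpha)},
\]
with \(\fa\) squarefree. We check this prime by prime. At a split \(\ell\) the identity reduces to \eqref{eq:split-union}. At inert \(\ell\) and at \(\ell=2\) only one prime ideal is involved, so it is trivial. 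Summing against \(m_X\) yields
\[
 A_X[d]-Y_Xg_X(d)=\mu(d)\sum_{d(\fa)=d}\mu_{\OO}(\fa)R_X(\fa).
\]
By Lemma~\ref{lem:content}, each such \(\fa\) is \((c)\fb\) with \(d=cN\fb\) and \(N\fa=c^2N\fb=cd\). The number of \(\fa\) with \(d(\fa)=d\) is at most \(3^{\omega(d)}\le\tau(d)^2\). We fix a parameter \(L\) to be chosen later and split according to whether \(c\le(\log X)^L\) or \(c>(\log X)^L\).

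For the small-content part, \(N\fa=cd\le(\log X)^L\sqrt X/(\log X)^B\). The map \(\fa\mapsto d(\fa)\) is determined by \(\fa\), so each \(\fa\) occurs for one \(d\) only. The contribution is therefore at most \(\sum_{N\fa\le \sqrt X/(\log X)^{B-L}}\tau(N\fa)^J|R_X(\fa)|\), using \(\tau(d)\le\tau(N\fa)\). To remove the weight \(\tau^J\) we apply Cauchy--Schwarz. One factor is \(\sum|R_X(\fa)|\), which is controlled by \eqref{eq:ideal-hypothesis} with \(B-L\ge B(A')\). The other factor is \(\sum\tau(N\fa)^{2J}|R_X(\fa)|\), which we bound trivially. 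We have \(|R_X(\fa)|\le A_X(\fa)+Y_X\gamma_X(\fa)\). Moreover, \(\sum_{\fa}\tau(N\fa)^{2J}A_X(\fa)\le\sum_\alpha m_X(\alpha)\sum_{\fa\mid(\alpha)}\tau(N\fa)^{2J}\ll\sum_{N\alpha\le CX}\tau(N\alpha)^{K}\) for some \(K\). By \eqref{eq:divisor-moment} this is \(\ll X(\log X)^{L'}\). The \(\gamma_X\) part is \(\ll Y_X(\log X)^{O(1)}\). Taking \(A'\) large compared with \(A\), \(L\) and these exponents gives \(X(\log X)^{-A}\).

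For the large-content part, \(c>(\log X)^L\), we do not use the hypothesis at all. We bound \(|R_X(\fa)|\le A_X(\fa)+Y_X\gamma_X(\fa)\) and treat the two terms separately. For the \(A_X\) term, \(\fa\mid(\alpha)\) forces \(c\mid\gcd(\Re\alpha,\Im\alpha)\) by Lemma~\ref{lem:content}, so \(\gcd(\Re\alpha,\Im\alpha)>(\log X)^L\). Interchanging sums, the contribution is at most
\[
 \sum_{\gcd(\Re\alpha,\Im\alpha)>(\log X)^L} m_X(\alpha)\sum_{\fa\mid(\alpha)}\tau(N\fa)^{J+2}
 \ll\sum_{\gcd>(\log X)^L}\tau(N\alpha)^{K'}.
\]
Lemma~\ref{lem:content-tail} with \(Z=(\log X)^L\) bounds this by \(X(\log X)^{L_{K'}-L/2}\). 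For the \(\gamma_X\) term, \(\gamma_X(\fa)\ll\tau(N\fa)^{B_0}/(c^2N\fb)\). Summing over \(\fb\) with \(N\fb\le\sqrt X\) costs \((\log X)^{O(1)}\), and summing over \(c>(\log X)^L\) gives \(\sum_c\tau(c)^{O(1)}c^{-2}\ll(\log X)^{-L+o(L)}\). Multiplying by \(Y_X\ll X(\log X)^M\), this term is also small. Choosing \(L\) large in terms of \(A\), \(J\), \(M\) and \(B_0\), and then \(B=B(A',\dots)+L\), completes the argument.

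The main obstacle is the bookkeeping of divisor-function exponents. We must keep track of them through the interchange of sums, the count of \(\fa\) per \(d\), and the weights, so that the exponent \(L_{K'}\) in Lemma~\ref{lem:content-tail} does not depend on \(L\). This works because \(K'\) depends only on \(J\) and \(M\), so \(L\) can be chosen after \(K'\) is fixed. We also need the Cauchy--Schwarz step to lose only a fixed power of \(\log X\), which the trivial bounds above provide.
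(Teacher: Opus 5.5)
Your proposal is correct and follows the paper's proof essentially step for step. You use the same exact local inclusion--exclusion identity, the same content split \(\fa=(c)\fb\) at \(Z=(\log X)^L\), and Cauchy--Schwarz with the trivial bound \(|R_X|\le A_X+Y_X\gamma_X\) to absorb \(\tau^J\) in the small-content range; the paper does this upgrade before splitting, which is immaterial. The large-content range is handled as in the paper, by Lemma~\ref{lem:content-tail} together with the \(\sum_{c>Z}\tau(c)^t c^{-2}\) tail, and your \(Z^{-1+\epsilon}\) bound for that tail works just as well as the paper's \(Z^{-1/2}\).
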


\begin{proof}
First we upgrade \eqref{eq:ideal-hypothesis} to include any
fixed weight \(\tau(N\fa)^J\). For \(1\le Q\le\sqrt X\),
Cauchy's inequality gives
\begin{align*}
 \sum_{N\fa\le Q}\tau(N\fa)^J|R_X(\fa)|
 &\le
 \left(\sum_{N\fa\le Q}|R_X(\fa)|\right)^{1/2}
 \left(\sum_{N\fa\le Q}\tau(N\fa)^{2J}|R_X(\fa)|\right)^{1/2}.
\end{align*}
For the second factor use \(|R_X|\le A_X+Y_X\gamma_X\).
The number of ideal divisors of \((\alpha)\) is at most
\(\tau(N\alpha)^2\), and \(N\fa\mid N\alpha\) if
\(\fa\mid(\alpha)\). Interchanging sums and using
\eqref{eq:divisor-moment} therefore gives
\[
 \sum_{N\fa\le Q}\tau(N\fa)^{2J}A_X(\fa)
 \ll\sum_{0<N\alpha\le CX}\tau(N\alpha)^{M+2J+2}
 \ll X(\log X)^{C_1}.
\]
There are at most \(\tau(n)\) Gaussian ideals of norm \(n\).
The main-term contribution is consequently bounded by
\[
 Y_X\sum_{n\le Q}\frac{\tau(n)^{2J+B_0+1}}n
 \ll X(\log X)^{C_2}.
\]
Using \eqref{eq:ideal-hypothesis} with a larger logarithmic
saving proves its divisor-weighted form.

For squarefree \(d\), local inclusion--exclusion gives the exact identity
\begin{equation}\label{eq:exact-transfer}
 \one_{d\mid N\alpha}
 =\mu(d)\sum_{d(\fa)=d}
              \mu_{\OO}(\fa)\one_{\fa\mid(\alpha)}.
\end{equation}
At a split prime this is \eqref{eq:split-union}; at an inert
or ramified prime there is a single ideal-divisibility condition.
Multiplication over the prime factors of \(d\) proves
\eqref{eq:exact-transfer}. Thus
\[
 A_X[d]-Y_Xg_X(d)
 =\mu(d)\sum_{d(\fa)=d}\mu_{\OO}(\fa)R_X(\fa).
\]

Set \(Z=(\log X)^L\) and \(D=\sqrt X/(\log X)^B\).
For terms with \(c(\fa)\le Z\) and \(d(\fa)\le D\),
Lemma~\ref{lem:content} gives \(N\fa\le DZ\).
Since \(\tau(d(\fa))\le\tau(N\fa)\), the weighted ideal estimate
bounds their total contribution after increasing \(B\) by \(L\).

For the terms with \(c(\fa)>Z\), bound the remainder by its
counting and main-term parts. For the former, interchange sums.
Every contributing \(\alpha\) has
\(\gcd(\Re\alpha,\Im\alpha)>Z\).
There are at most \(\tau(N\alpha)^2\) incident ideals, and each
satisfies \(\tau(d(\fa))^J\le\tau(N\alpha)^J\). Hence
\begin{align*}
 \sum_{\substack{d(\fa)\le D\\c(\fa)>Z}}
       \tau(d(\fa))^J A_X(\fa)
 &\ll
 \sum_{\substack{0<N\alpha\le CX\\
                  \gcd(\Re\alpha,\Im\alpha)>Z}}
       \tau(N\alpha)^{M+J+2}\\
 &\ll XZ^{-1/2}(\log X)^{C_3}
\end{align*}
by Lemma~\ref{lem:content-tail}.
For the main term, write \(\fa=(c)\fb\) and \(n=N\fb\).
The hypotheses and the bound for the number of ideals of norm
\(n\) imply
\begin{align*}
 \sum_{\substack{d(\fa)\le D\\c(\fa)>Z}}
       \tau(d(\fa))^J\gamma_X(\fa)
 &\ll \sum_{c>Z}\frac{\tau(c)^{J+2B_0}}{c^2}
           \sum_{n\le D/c}\frac{\tau(n)^{J+B_0+1}}n\\
 &\ll Z^{-1/2}(\log X)^{C_4}.
\end{align*}
Indeed, for fixed \(t\),
\[
 \sum_{c>Z}\frac{\tau(c)^t}{c^2}
 \le Z^{-1/2}\sum_{c\ge1}\frac{\tau(c)^t}{c^{3/2}}
 \ll_t Z^{-1/2}.
\]
The main-term tail after multiplication by \(Y_X\) is therefore
\(\ll XZ^{-1/2}(\log X)^{M+C_4}\).
Choose \(L\) sufficiently large for both tails, then \(B\)
sufficiently large for the small-content estimate.
This proves \eqref{eq:transfer-conclusion}.
\end{proof}

\section{Distribution of the normalized CM orders}

Fix \(b=2(1+i)\), so \(Nb=8\).
A primary Gaussian prime in this paper is a prime element
\(\pi\equiv1\pmod b\) whose norm is a rational prime.
Let \(\PP\) be their set, and write
\[
 \Pi(T)=\#\{\pi\in\PP:N\pi\le T\}
       =2\pi(T;4,1)\sim\Li(T).
\]
In particular, both primary conjugates above a split rational
prime \(p\) are counted. For \(E:y^2=x^3-x\), these are the
two Frobenius eigenvalues of its reduction at \(p\), and the
CM point-count formula gives
\begin{equation}\label{eq:cm-norm}
 \#E(\mathbf F_p)=N(\pi-1),\qquad
 n_p=N\left(\frac{\pi-1}{b}\right)
 \quad(N\pi=p).
\end{equation}
See \cite[\S1]{IwaniecJimenezCM} for this normalization.

\subsection{The prime-distribution input}

We use Huxley's Gaussian ray-class Bombieri--Vinogradov theorem
in the prime-ideal formulation stated in
\cite[Theorem~4]{WongCM2026}.
If \(h(\fq)\) is the ray-class number and
\(\pi_{\mathbf Q(i)}(t;\fq,\mathcal C)\) counts prime ideals of norm at most \(t\)
in the ray class \(\mathcal C\), this formulation gives, for
every \(A>0\) and suitable \(B=B(A)\),
\begin{equation}\label{eq:ray-bv}
 \sum_{N\fq\le\sqrt X/(\log X)^B}
 \frac{h(\fq)}{\Phi(\fq)}
 \max_{\mathcal C}\max_{2\le t\le X}
 \left|\pi_{\mathbf Q(i)}(t;\fq,\mathcal C)
             -\frac{\Li(t)}{h(\fq)}\right|
 \ll_A\frac{X}{(\log X)^A}.
\end{equation}
For \(\fq=b\fa\), all four units are distinguished modulo
\(\fq\). Since \(\mathbf Z[i]\) has class number one,
\[
 h(b\fa)=\frac{\Phi(b\fa)}4.
\]
Thus the weight \(h/\Phi\) in \eqref{eq:ray-bv} is \(1/4\)
on these moduli.
The degree-two prime ideals counted up to \(X\) have norm \(\ell^2\) and number
\(O(\sqrt X)\). Deleting them changes each residue count by
at most this amount. Since there are \(O(Q)\) Gaussian ideals
of norm at most \(Q\), the total deletion error at
\(Q=\sqrt X/(\log X)^B\) is \(O(X/(\log X)^B)\).
It can be absorbed by increasing \(B\).

\begin{proposition}\label{prop:first-distribution}
For a squarefree positive integer \(d\), let
\[
 A_d(X)=2\#\{p\le X:p\text{ prime},\
                   p\equiv1\pmod4,\ d\mid n_p\}.
\]
Define a multiplicative function \(g\) on squarefree integers by
\begin{equation}\label{eq:density}
 g(2)=\frac12,\qquad
 g(\ell)=
 \begin{cases}
 \displaystyle\frac2{\ell-1}-\frac1{(\ell-1)^2},
       &\ell\equiv1\pmod4,\\[4pt]
 \displaystyle\frac1{\ell^2-1},&\ell\equiv3\pmod4.
 \end{cases}
\end{equation}
For every \(A>0\) and integer \(J\ge0\), some \(B=B(A,J)\)
satisfies
\begin{equation}\label{eq:first-distribution}
 \sum_{d\le\sqrt X/(\log X)^B}\mu(d)^2\tau(d)^J
       |A_d(X)-g(d)\Li(X)|
 \ll_{A,J}\frac{X}{(\log X)^A}.
\end{equation}
\end{proposition}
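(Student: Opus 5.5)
The plan is to apply Theorem~\ref{thm:transfer} with the weight
\[
 m_X(\alpha)=\#\{\pi\in\PP:\ N\pi\le X,\ \pi-1=b\alpha\},
\]
that is, \(m_X\) is the indicator of \(\alpha=(\pi-1)/b\) for primary \(\pi\) with \(N\pi\le X\). Since \(N\alpha\le X\), we may take \(C=1\) and \(M=0\). By \eqref{eq:cm-norm}, each prime \(p\equiv1\pmod4\) with \(p\le X\) contributes its two primary conjugates. Hence \(A_X[d]=\sum_{d\mid N\alpha}m_X(\alpha)=A_d(X)\), and the factor \(2\) in \(A_d(X)\) is exactly this conjugate count. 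Conversely, a primary \(\pi\) with \(N\pi=p\) prime forces \(p\equiv1\pmod4\) or \(p=2\), and \(p=2\) is excluded because \(1+i\not\equiv1\pmod b\).

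For the ideal counts, \(\fa\mid(\alpha)\) means \(\pi\equiv1\pmod{b\fa}\). This is a single ray class modulo \(\fq=b\fa\): the generator of the prime ideal \((\pi)\) is pinned down by the congruence, since the four units are distinct modulo \(b\fa\). Thus \(A_X(\fa)=\pi_{\mathbf Q(i)}(X;b\fa,\mathcal C_1)\) after deleting degree-two primes. Take \(Y_X=\Li(X)\) and \(\gamma_X(\fa)=1/h(b\fa)=4/\Phi(b\fa)=1/(2\Phi(\fa))\), using \(\Phi(b)=8\) when \(\fa\) is odd. When \((1+i)\mid\fa\), the modulus is \(b(1+i)\fa'\) and a direct computation of \(\Phi\) gives the appropriate value. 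In either case \(\gamma_X(\fa)\ll\tau(N\fa)/N\fa\), since \(N\fa/\Phi(\fa)\ll\log\log\) and this is bounded by a divisor power. Hypothesis \eqref{eq:ideal-hypothesis} is then \eqref{eq:ray-bv} restricted to moduli \(b\fa\), where the weight \(h/\Phi=1/4\), together with the deletion error already discussed. The range \(N(b\fa)=8N\fa\le\sqrt X/(\log X)^{B}\) is arranged by enlarging \(B\).

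Theorem~\ref{thm:transfer} then yields \eqref{eq:transfer-conclusion} with
\[
 g_X(d)=\mu(d)\sum_{d(\fa)=d}\mu_{\OO}(\fa)\gamma_X(\fa).
\]
It remains to check that \(g_X(d)\) equals \(g(d)\) in \eqref{eq:density}. Since \(\gamma_X\) is multiplicative on ideals (as \(\Phi\) is, up to the fixed factor at \(b\)), the sum factors over primes \(\ell\mid d\).
\begin{itemize}
\item For split \(\ell\), the contribution is \(-\bigl(\tfrac{1}{\ell-1}+\tfrac1{\ell-1}-\tfrac1{(\ell-1)^2}\bigr)\), giving \(g(\ell)=\tfrac2{\ell-1}-\tfrac1{(\ell-1)^2}\). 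This uses the normalized local factor \(\gamma(\fl)=1/\Phi(\fl)=1/(\ell-1)\), relative to the total mass \(\gamma(1)=1\) after accounting for the factor \(2\).
\item For inert \(\ell\), the contribution is \(1/(\ell^2-1)\).
\item At \(2\), the condition \((1+i)\mid\alpha\) means \(\pi\equiv1\pmod{4(1+i)}\)'s coset structure, whose relative density is \(1/\Phi(1+i)\cdot\)(index correction), giving \(g(2)=1/2\).
\end{itemize}

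For the normalization, \(\gamma_X(\OO)\Li(X)\) must equal the \(d=1\) count \(2\pi(X;4,1)\sim\Li(X)\), which is consistent with \(\gamma(\OO)=4/\Phi(b)\cdot\) something equal to \(1\). I would fix the constant by matching \(d=1\) rather than tracking every factor.

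I expect the main obstacle to be the bookkeeping at the prime \(2\) and the exact normalization of \(\gamma_X\). The modulus \(b(1+i)\) is not of the form considered with \(\fa\) odd, and the ray class group modulo \(4(1+i)\) has a different \(h/\Phi\) relation. One must verify that \(h(b\fa)=\Phi(b\fa)/4\) still holds there, which it does because units stay distinct, and then compute \(\Phi(b(1+i))/\Phi(b)=2\) to obtain \(g(2)=1/2\). All the analytic content is supplied by Theorem~\ref{thm:transfer} and \eqref{eq:ray-bv}.
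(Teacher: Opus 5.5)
Your argument follows the paper's proof. You take \(m_X\) to be the indicator of \((\pi-1)/b\), rewrite \(\fa\mid(\alpha)\) as \(\pi\equiv1\pmod{b\fa}\), and obtain the hypothesis of Theorem~\ref{thm:transfer} from \eqref{eq:ray-bv} on the moduli \(b\fa\) (where \(h(b\fa)=\Phi(b\fa)/4\)) plus the deletion of degree-two primes. You then read \(g\) off prime by prime from \eqref{eq:exact-transfer}. The one real defect is the normalization, which you leave inconsistent rather than resolve.

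\begin{itemize}
\item You use \(\Phi(b)=8\), but \(\Phi(b)=\Phi((1+i)^3)=2^3-2^2=4\). The four units already make up \((\OO/b)^\times\); this is exactly why each odd prime ideal has a unique primary generator, i.e.\ \(h(b)=1\).
\item Hence \(\gamma_X(\fa)=4/\Phi(b\fa)=1/\Phi(\fa)\) for odd \(\fa\), and \(\gamma_X(\OO)=1\), consistent with \(\Pi(X)\sim\Li(X)\).
\item With your value \(1/(2\Phi(\fa))\), hypothesis \eqref{eq:ideal-hypothesis} would already fail at \(\fa=\OO\), where the remainder would be \(\sim\Li(X)/2\). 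The local product would also return \(g(d)/2\) instead of \(g(d)\).
\item You cannot ``fix the constant by matching \(d=1\)'': \(\gamma_X\) is forced by the main term \(\Li(X)/h(b\fa)\) in \eqref{eq:ray-bv}. What is needed is the correct value of \(\Phi(b)\), after which the normalization is automatic.
\end{itemize}

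Two smaller slips should also be corrected.
\begin{itemize}
\item At \(2\) the modulus is \(b(1+i)=2(1+i)^2=4i\), i.e.\ \((1+i)^4\), not \(4(1+i)\). Then \(\Phi((1+i)^4)=8\) gives \(\gamma_X((1+i))=1/2\) and \(g(2)=1/2\) directly. This matches the paper's remark that adjoining \((1+i)\) halves the density.
\item The bound \(\gamma_X(\fa)\ll\tau(N\fa)/N\fa\) should rest on the pointwise inequality \(N\fa/\Phi(\fa)=\prod_{\mathfrak p\mid\fa}(1-N\mathfrak p^{-1})^{-1}\le\tau(N\fa)\), not on a \(\log\log\) bound. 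The inequality holds because the factors above each rational \(\ell\mid N\fa\) multiply to at most \(2\).
\end{itemize}

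With these corrections your proof coincides with the paper's.
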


\begin{proof}
Take the weight \(m_X(\alpha)\) to be the indicator that
\(\alpha=(\pi-1)/b\) for \(\pi\in\PP\), \(N\pi\le X\).
Its support consists of distinct nonzero Gaussian integers
with \(N\alpha\ll X\).
The divisibility condition \(\fa\mid(\alpha)\) is equivalent to
\(\pi\equiv1\pmod{b\fa}\). Equation~\eqref{eq:ray-bv} and
the preceding deletion estimate give
\eqref{eq:ideal-hypothesis} with
\[
 Y_X=\Li(X),\qquad \gamma(\fa)=\frac4{\Phi(b\fa)}
       \ll\frac{\tau(N\fa)^2}{N\fa}.
\]
The factor \(Nb=8\) is absorbed in the logarithmic cutoff.
For odd ideals, \(\gamma(\fa)=1/\Phi(\fa)\); adjoining
a single factor \((1+i)\) multiplies the density by \(1/2\).
The local identity \eqref{eq:exact-transfer} therefore yields
\eqref{eq:density}. Theorem~\ref{thm:transfer} proves the result.
\end{proof}

\section{Distribution of the switched sequence}

Put
\[
 z=X^{1/8},\qquad y=X^{1/3},
\]
and define the set of Gaussian integers
\begin{equation}\label{eq:switched-set}
 \WW_X=\left\{b\varepsilon\pi_1\pi_2\pi_3:
 \begin{array}{l}
 \varepsilon\in\{1,-1,i,-i\},\quad\pi_j\in\PP,\\
 z\le N\pi_3<y<N\pi_2<N\pi_1,\quad
 8N\pi_1N\pi_2N\pi_3\le X
 \end{array}\right\}.
\end{equation}
Unique factorization and the strict order of the three norms
give each element exactly one representation in
\eqref{eq:switched-set}. Thus \(\WW_X\) has multiplicity one,
and \(|\WW_X|\ll X\) by lattice counting.

\begin{theorem}\label{thm:switching}
For every \(A>0\), there is \(B=B(A)>0\) such that, with
\(Q=\sqrt X/(\log X)^B\),
\begin{equation}\label{eq:switched-ideal}
 \sum_{N\fa\le Q}\max_{(\alpha,\fa)=1}
 \left|
 \#\{\omega\in\WW_X:\omega\equiv\alpha\pmod\fa\}
 -\frac{\#\{\omega\in\WW_X:(\omega,\fa)=1\}}{\Phi(\fa)}
 \right|
 \ll_A\frac{X}{(\log X)^A}.
\end{equation}
\end{theorem}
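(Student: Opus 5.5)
We treat \(\WW_X\) as a bilinear (type II) sequence and apply the multiplicative large sieve for Gaussian Hecke characters, in the manner of Bombieri--Vinogradov for products of primes. First we reduce to odd moduli, or carry the factor \(b\) along: since \(\omega=b\varepsilon\pi_1\pi_2\pi_3\) and the residue \(\alpha\) is coprime to \(\fa\), we may factor \(\fa=(1+i)^k\fa'\) and note that \(\omega\) is never coprime to \((1+i)\). Moduli with \(k\ge1\) therefore contribute trivially, apart from an empty-class check. For odd \(\fa\), the condition \(\omega\equiv\alpha\pmod\fa\) becomes \(\varepsilon\pi_1\pi_2\pi_3\equiv\alpha b^{-1}\pmod\fa\). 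We then expand the difference by orthogonality of the characters \(\chi\) of \((\OO/\fa)^\times\). The principal character gives exactly the subtracted term \(\#\{\omega:(\omega,\fa)=1\}/\Phi(\fa)\), so it remains to bound
\[
\sum_{N\fa\le Q}\frac{1}{\Phi(\fa)}\sum_{\chi\ne\chi_0}\Bigl|\sum_{\omega\in\WW_X}\chi(\omega/b)\Bigr|.
\]
The unit \(\varepsilon\) runs over all four units, so characters nontrivial on units drop out. The surviving characters are Hecke characters of the ray class group modulo \(\fa\), and the sums become sums over ideals \(\pi_1\pi_2\pi_3\).

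We then reduce each \(\chi\) to its primitive conductor \(\ff\mid\fa\), as in the classical proof. This costs a factor \(\sum_{\fa:\ \ff\mid\fa,\ N\fa\le Q}1/\Phi(\fa)\ll(\log X)^{O(1)}/\Phi(\ff)\). The resulting sum is over primitive characters of conductor \(N\ff\le Q\). We group it dyadically in \(N\ff\sim F\) and split off a small-conductor range \(F\le(\log X)^{B_1}\).

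For the small conductors we use the Gaussian Siegel--Walfisz theorem for the single variable \(\pi_1\), which is the longest since \(N\pi_1>X^{1/3}\). Alternatively we invoke \eqref{eq:ray-bv} at small moduli. With \(\pi_2,\pi_3\) fixed, the sum of \(\chi(\pi_1)\) over \(\pi_1\) in the range cut out by \(N\pi_2<N\pi_1\le X/(8N\pi_2N\pi_3)\) is \(\ll X(\log X)^{-A'}/(N\pi_2N\pi_3)\). Summing over \(\pi_2,\pi_3\) loses only \((\log X)^2\).

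For the large conductors we write the sum in bilinear form \(\sum_{\fm}\sum_{\fl}a_\fm b_\fl\chi(\fm\fl)\), with \(\fl=(\pi_1)\) and \(\fm=(\pi_2\pi_3)\) of norm between \(X^{11/24}\) and \(X^{2/3}\). The norm constraints \(N\pi_2<N\pi_1\) and \(8N\pi_1N\pi_2N\pi_3\le X\) couple the two variables. We separate them with a Perron-type integral, or by a fine partition into short intervals costing \((\log X)^{O(1)}\), and also separate the argument (angle) aspect if the large sieve needs smooth norm ranges. The multiplicative large sieve for \(\mathbf Q(i)\) then gives
\[
\sum_{N\ff\sim F}\frac{1}{\Phi(\ff)}\sideset{}{^*}\sum_{\chi}\Bigl|\sum\sum a_\fm b_\fl\chi(\fm\fl)\Bigr|\ll\frac{1}{F}\bigl(M+F^2\bigr)^{1/2}\bigl(L+F^2\bigr)^{1/2}\|a\|\|b\|(\log X)^{O(1)},
\]
where \(ML\asymp X\) and \(\|a\|\|b\|\ll\sqrt X\). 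This is \(\ll X(\log X)^{O(1)}\bigl(F^{-1}+M^{-1/2}+L^{-1/2}+Q\,X^{-1/2}\bigr)\). Both lengths exceed \(X^{1/3}\), and \(F\le Q=\sqrt X/(\log X)^B\), so each term saves a power of \(\log X\) once \(B_1\) and \(B\) are large.

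The main obstacle is the decoupling in the bilinear step: removing the ordering and size constraints between \(\pi_1\) and \(\pi_2\pi_3\) with only logarithmic loss. A secondary difficulty is making sure the Gaussian large sieve inequality used treats ray-class characters modulo \(\fa\), including the unit and angular components, uniformly. I would first try a dyadic-plus-short-interval partition of the norm of \(\pi_1\) into intervals of relative length \((\log X)^{-C}\). The edge errors are controlled by the prime count in short ranges trivially weighted by \(1/\Phi\), via the crude bound \(\sum_{N\fa\le Q}\#\{\omega\equiv\alpha\}\), as in Lemma~\ref{lem:content-tail}.
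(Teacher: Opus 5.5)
Your outline works if you decouple with Perron's formula. It differs from the paper mainly in the bilinear split and in the decoupling device.

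\textbf{The paper's route.} The paper sets the short prime \(\pi_3\), with norm in \([X^{1/8},X^{1/3})\), against \(\beta=\pi_1\pi_2\). The ordering \(N\pi_2<N\pi_1\) then stays inside \(\beta\), the condition \(N\pi_3<y<N\pi_2\) separates by itself, and only the product cutoff couples the variables. It partitions \(N\pi_3\) into intervals of relative length \(\delta=(\log X)^{-d_0}\) \emph{before} expanding in characters. The whole edge then lies in the annulus \(X<N\omega\le(1+\delta)X\) and is counted in each residue class by lattice points, \(\ll\delta X/N\fa+\sqrt{X/N\fa}+1\), ignoring primality. Siegel--Walfisz is applied to the short variable, and the large-sieve cross terms are \(X^{2/3}\) and \(X^{15/16}\).

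\textbf{Your route.} Your split, \(\pi_1\) against \(\pi_2\pi_3\), is more balanced: both lengths are at least \(X^{1/3}\). It also lets Siegel--Walfisz act on \(\pi_1\) with no decoupling at small conductors. The cost is that the ordering becomes a second cross-constraint. Perron handles both constraints because norms are integers. With cutoffs \(\lfloor X/8\rfloor+\tfrac12\) for \(N\pi_1N\pi_2N\pi_3\) and \(1+\tfrac1{2X}\) for \(N\pi_1/N\pi_2\), every term lies at logarithmic distance \(\gg1/X\) from the cutoff. Truncation at height \(X^3\) therefore costs \(O(X^{-2})\) per term, small enough to sum trivially over all characters, and each \(t\)-integral costs only \(\log X\). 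After dyadic localization making \(ML\ll X\), your large-sieve computation goes through.

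\textbf{Two cautions.} First, the short-interval variant you say you would try first fails on the ordering constraint as you describe it. On a block \(N\pi_1\in[T,(1+\delta)T)\), replacing \(N\pi_2<N\pi_1\) by \(N\pi_2<T\) discards the \(\omega\) with \(T\le N\pi_2<N\pi_1\). This set is not confined to an annulus in \(N\omega\).
\begin{itemize}
\item Counting it in a class modulo \(\fa\) by fixing two primes and lattice-counting the third leaves perimeter and \(O(1)\) terms that exceed \(X\) by a power after summing over \(N\fa\le Q\).
\item A divisor-moment bound in the style of Lemma~\ref{lem:content-tail} works only for a fixed residue such as \(\alpha=-1\), not under \(\max_\alpha\).
\end{itemize}
You can remove the problem by symmetrizing in \((\pi_1,\pi_2)\) first, where the diagonal \(\pi_2\in\{\pi_1,\overline{\pi_1}\}\) is negligible, or by adopting the paper's split.

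Second, when passing to primitive characters, keep the condition \((\omega,\fm)=1\) for the cofactor \(\fm=\fa/\ff\) inside the bilinear coefficients, and bound uniformly in \(\fm\). The paper does exactly this with \(A_\fm(\chi)\) and \(C_\fm(\chi)\). If instead you replace \(\chi\) by \(\chi^*\) on all of \(\WW_X\), as one may for primes, you are left with the \(\omega\) having a prime factor dividing \(\fm\). Summed trivially over the characters, these contribute about \(XQ/z\).
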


The estimate is the ideal-modulus input for the switching
argument; compare \cite[Proposition~5.1]{IwaniecJimenezCM}.
We establish it before applying Theorem~\ref{thm:transfer}
to rational divisibility.

\subsection{Analytic inputs}

We shall use
\begin{equation}\label{eq:elementary-ideal-sums}
 \#\{\fa:N\fa\le U\}\ll U,\qquad
 \sum_{N\fa\le U}\frac1{\Phi(\fa)}\ll(\log(2U))^8.
\end{equation}
The first bound counts Gaussian generators up to the four units.
For the second, \(N\fa/\Phi(\fa)\le\tau(N\fa)^2\) and the number
of ideals of norm \(n\) is at most \(\tau(n)\); apply the
harmonic divisor moment \(\sum_{n\le U}\tau(n)^3/n\ll
(\log(2U))^8\).

For every fixed \(r,C>0\), equation~\eqref{eq:ray-bv} also gives
the small-conductor estimate
\begin{equation}\label{eq:small-character}
 \sum_{\substack{\pi\in\PP\\T\le N\pi<T'}}\chi(\pi)
 \ll_{r,C}\frac{T}{(\log X)^C},
 \qquad T\le T'\le2T,
\end{equation}
uniformly when
\[
 X^{1/8}\le T\le X^{1/3},\qquad
 1<N\ff\le(\log X)^r,\qquad \ff\text{ odd},
\]
and \(\chi\bmod\ff\) is a primitive nonprincipal character.
Indeed, apply \eqref{eq:ray-bv} at scale \(2T\) to the
modulus \(b\ff\), subtract the two endpoint estimates with an
\(O(1)\) adjustment for the half-open interval, and sum the
reduced residue classes with the character weights.
Their number is bounded by a fixed power of \(\log X\).
Since \(\ff\) is odd, the Chinese remainder theorem permits
every invertible class modulo \(\ff\) together with the
primary condition modulo \(b\). The main terms cancel by
\(\sum_{\xi\bmod\ff}\chi(\xi)=0\).
The deletion of degree-two prime ideals costs \(O(\sqrt T)\).
Increasing the logarithmic saving in \eqref{eq:ray-bv}
proves \eqref{eq:small-character}.

The second input is the Gaussian multiplicative large sieve:
\begin{equation}\label{eq:large-sieve}
 \sum_{N\ff\le U}\frac{N\ff}{\Phi(\ff)}
 \sum_{\chi\bmod\ff}^{*}
 \left|\sum_{0<N\alpha\le N}a_\alpha\chi(\alpha)\right|^2
 \ll (U^2+N)\sum_{0<N\alpha\le N}|a_\alpha|^2.
\end{equation}
The star denotes primitive characters of
\((\OO/\ff)^\times\), extended by zero on nonunits.
This is Hinz \cite[(3.1), p.~180]{HinzGaussian},
specialized to \(\mathbf Q(i)\) and auxiliary ideal \(\OO\).
Coprimality restrictions can be imposed by setting the
corresponding coefficients to zero.

\subsection{Proof of the ideal distribution estimate}

\begin{proof}[Proof of Theorem~\ref{thm:switching}]
For an even ideal, both counts in \eqref{eq:switched-ideal}
are zero, so we consider odd moduli.
Choose \(\delta=(\log X)^{-d_0}\), with \(d_0\) to be fixed.
Partition \([z,y)\) into the intervals
\[
 I_T=[T,\min\{(1+\delta)T,y\}),\qquad
 T=z(1+\delta)^k,\qquad k\in\mathbf Z_{\ge0},\quad T<y.
\]
Their number satisfies \(H\ll(\log X)^{d_0+1}\).
In the block \(N\pi_3\in I_T\), replace the product cutoff
by \(8N(\pi_1\pi_2)T\le X\).
Let \(\WW'_X\) be the union of the resulting sets.
Representations remain unique, and
\[
 \WW_X\subseteq\WW'_X,\qquad
 \WW'_X\setminus\WW_X
 \subseteq\{\omega:X<N\omega\le(1+\delta)X\}.
\]
For every ideal \(\fa\) and every residue class, the number
of lattice points in this annulus is
\[
 O\left(\frac{\delta X}{N\fa}
       +\frac{\sqrt X}{\sqrt{N\fa}}+1\right).
\]
This follows by counting points in the two disks using the
area and perimeter bounds for a rotated square lattice of
covolume \(N\fa\).
Summing over \(N\fa\le Q\) by
\eqref{eq:elementary-ideal-sums} gives
\[
 O(\delta X\log X+\sqrt{XQ}+Q).
\]
The change in the coprime main term is at most
\(O((\delta X+\sqrt X)(\log X)^8)\).
Thus replacing \(\WW_X\) by \(\WW'_X\) costs
\begin{equation}\label{eq:boundary-error}
 O((\delta X+X^{3/4})(\log X)^8)
\end{equation}
for \(Q\le\sqrt X\). The boundary is estimated for the union
as a whole, so this error is not multiplied by \(H\).

Fix a block \(I_T\). Let \(a(\alpha)\) and \(c_T(\beta)\)
be the indicators, respectively, of
\[
 \alpha=\pi_3\in\PP,\quad N\pi_3\in I_T,
\]
and
\[
 \beta=\pi_1\pi_2,\quad
 \pi_1,\pi_2\in\PP,\quad
 y<N\pi_2<N\pi_1,\quad N\beta\le X/(8T).
\]
The factorization defining \(\beta\) is unique. Consequently
\begin{equation}\label{eq:energies}
 \sum_\alpha|a(\alpha)|^2\ll T,\qquad
 \sum_\beta|c_T(\beta)|^2\ll X/T.
\end{equation}
Character orthogonality bounds the sum of discrepancies of
this block by
\[
 4\sum_{\substack{N\fa\le Q\\\fa\ {\rm odd}}}
       \frac1{\Phi(\fa)}
       \sum_{\substack{\chi\bmod\fa\\\chi\ne\chi_0}}
       |A_\fa(\chi)C_\fa(\chi)|,
\]
where the character sums use \(a\) and \(c_T\).
The factor four accounts for the units in
\eqref{eq:switched-set}; \(b\) is invertible modulo \(\fa\).
Induce each character from its primitive conductor \(\ff\)
and write \(\fa=\ff\fm\).
Since \(\Phi(\ff\fm)\ge\Phi(\ff)\Phi(\fm)\), enlarging
the positive summation range gives
\begin{equation}\label{eq:induced-sum}
 4\sum_{\substack{N\fm\le Q\\\fm\ {\rm odd}}}\frac1{\Phi(\fm)}
  \sum_{\substack{N\ff\le Q\\\ff\ {\rm odd}}}\frac1{\Phi(\ff)}
  \sum_{\substack{\chi\bmod\ff\ {\rm primitive}\\\chi\ne\chi_0}}
       |A_\fm(\chi)C_\fm(\chi)|.
\end{equation}
Here
\[
 A_\fm(\chi)=\sum_{(\alpha,\fm)=1}a(\alpha)\chi(\alpha),
 \qquad
 C_\fm(\chi)=\sum_{(\beta,\fm)=1}c_T(\beta)\chi(\beta).
\]
Extension of a primitive character by zero already accounts
for coprimality with \(\ff\); coprimality of \(\ff\) and
\(\fm\) is not required.

Put \(R=(\log X)^r\). For \(N\ff\le R\), at most four
primary primes occurring in \(a\) can divide \(\fm\), since
their norms are at least \(X^{1/8}\) and \(N\fm\le X^{1/2}\).
Therefore, uniformly in \(\fm\),
\[
 A_\fm(\chi)=\sum_\alpha a(\alpha)\chi(\alpha)+O(1).
\]
Use \eqref{eq:small-character} and
\(|C_\fm(\chi)|\ll X/T\).
The number of primitive characters modulo \(\ff\) is at most
\(\Phi(\ff)\), and there are \(O(R)\) ideals of norm at most
\(R\). The small-conductor contribution to
\eqref{eq:induced-sum} is bounded by
\begin{equation}\label{eq:small-conductors}
 \ll R(\log X)^8
       \left(\frac{X}{(\log X)^C}+\frac Xz\right).
\end{equation}

For \(U<N\ff\le2U\), \(U\ge R\), Cauchy's inequality,
\eqref{eq:large-sieve}, and \eqref{eq:energies} give,
uniformly in \(\fm\),
\begin{align*}
 &\sum_{U<N\ff\le2U}\frac1{\Phi(\ff)}
       \sum_{\chi\bmod\ff}^{*}
           |A_\fm(\chi)C_\fm(\chi)|\\
 &\quad\ll
 \frac1U\bigl((U^2+T)T\bigr)^{1/2}
          \bigl((U^2+X/T)X/T\bigr)^{1/2}\\
 &\quad\ll
 \sqrt X\left(U+\sqrt T+\sqrt{X/T}+\frac{\sqrt X}{U}\right).
\end{align*}
Summing dyadic intervals up to \(Q\) and then summing
\(1/\Phi(\fm)\) bounds the large-conductor part by
\begin{equation}\label{eq:large-conductors}
 \ll\sqrt X(\log X)^9
       \left(Q+\sqrt y+\sqrt{X/z}+\frac{\sqrt X}{R}\right).
\end{equation}
After multiplication by \(\sqrt X\), the middle two
terms have sizes \(X^{2/3}\) and \(X^{15/16}\).

Given \(A>0\), one possible choice is
\[
 d_0=A+20,\qquad r=A+d_0+20,\qquad
 B=A+d_0+20,\qquad C=A+d_0+r+20.
\]
With these choices, \eqref{eq:boundary-error} and the sums
over \(H\ll(\log X)^{d_0+1}\) blocks of
\eqref{eq:small-conductors} and \eqref{eq:large-conductors}
are \(O_A(X/(\log X)^A)\).
The fixed powers of \(\log X\) multiplying \(X^{7/8}\),
\(X^{15/16}\), and smaller powers are absorbed for large \(X\).
This proves \eqref{eq:switched-ideal}.
\end{proof}

\subsection{Rational divisibility in the switched sequence}

\begin{lemma}\label{lem:coprimality}
For every fixed integer \(J\ge0\) and \(Q\le\sqrt X\),
\begin{equation}\label{eq:coprimality-removal}
 \sum_{\substack{N\fa\le Q\\\fa\ {\rm odd}}}
 \frac{\tau(N\fa)^J}{\Phi(\fa)}
 \#\{\omega\in\WW_X:(\omega,\fa)\ne1\}
 \ll_J X^{7/8}(\log X)^{C_J}.
\end{equation}
Consequently the ideal-divisibility counts of \(1+\omega\)
have the form
\[
 B_X(\fa)=|\WW_X|\gamma_B(\fa)+R_B(\fa),\qquad
 \gamma_B(\fa)=
 \begin{cases}
 1/\Phi(\fa),&\fa\text{ odd},\\
 0,&\fa\text{ even},
 \end{cases}
\]
and satisfy \eqref{eq:ideal-hypothesis}.
\end{lemma}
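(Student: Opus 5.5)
The plan has two parts: first bound the non-coprime elements of \(\WW_X\) directly, then combine that bound with Theorem~\ref{thm:switching} to verify the ideal hypothesis for \(1+\omega\).

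\emph{Non-coprime elements.} Let \(\fa\) be odd with \(N\fa\le Q\le\sqrt X\). Since \(b\) and the units are coprime to \(\fa\), the condition \((\omega,\fa)\ne1\) forces some \(\pi_j\) to lie in a prime ideal \(\fq\mid\fa\) of degree one, with \(\fq=(\pi_j)\). The norm condition gives \(N\pi_j\ge z=X^{1/8}\). Fix such a \(\fq\). The elements \(\omega\in\WW_X\) divisible by \(\pi_j\) are \(\pi_j\) times Gaussian integers of norm \(\le X/N\fq\). Counting lattice points then gives at most \(O(X/N\fq)\le O(X^{7/8})\) elements, uniformly in \(\fq\). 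The number of prime ideals dividing \(\fa\) is at most \(\tau(N\fa)\). Hence
\[
 \#\{\omega\in\WW_X:(\omega,\fa)\ne1\}\ll \tau(N\fa)\,X^{7/8}.
\]
Next I sum against \(\tau(N\fa)^J/\Phi(\fa)\) over \(N\fa\le Q\). Using \(N\fa/\Phi(\fa)\le\tau(N\fa)^2\), at most \(\tau(n)\) ideals of norm \(n\), and the harmonic divisor moments from Lemma~\ref{lem:content-tail}, this gives \(X^{7/8}(\log X)^{C_J}\). That proves \eqref{eq:coprimality-removal}.

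\emph{The ideal hypothesis.} If \(\fa\) is even, then \(1+\omega\) is a unit modulo \((1+i)\), because \(b\mid\omega\). So \(B_X(\fa)=0\), and \(R_B(\fa)=0\) as well. For odd \(\fa\), the condition \(\fa\mid(1+\omega)\) means \(\omega\equiv-1\pmod\fa\), and \(-1\) is coprime to \(\fa\). Theorem~\ref{thm:switching} applied with \(\alpha=-1\) then gives
\[
 \bigl|B_X(\fa)-\#\{\omega:(\omega,\fa)=1\}/\Phi(\fa)\bigr|,
\]
summed over \(N\fa\le Q\), of size \(\ll X/(\log X)^A\). Writing \(\#\{(\omega,\fa)=1\}=|\WW_X|-\#\{(\omega,\fa)\ne1\}\), it remains to control
\[
 \sum_{N\fa\le Q}\frac{1}{\Phi(\fa)}\,\#\{(\omega,\fa)\ne1\}.
\]
This is \eqref{eq:coprimality-removal} with \(J=0\), so it is \(\ll X^{7/8}(\log X)^{C_0}\). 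Combining the two bounds yields \eqref{eq:ideal-hypothesis} with \(Y=|\WW_X|\) and \(\gamma_B\) as stated.

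The side conditions of Theorem~\ref{thm:transfer} are routine. The weight is an indicator, \(|\WW_X|\ll X\), and \(\gamma_B\ll\tau^2/N\fa\). The support \(N(1+\omega)\le 2X\) follows from \(N\omega\le X\).

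The only delicate step is the uniform count of multiples of \(\pi_j\) in \(\WW_X\). Two things need checking there: that the unique-representation structure does not create multiplicity, and that the lattice count \(O(X/N\fq+\sqrt{X/N\fq}+1)\) is dominated by \(X^{7/8}\). Both hold because \(N\fq\ge X^{1/8}\) and \(X/N\fq\ge 1\).
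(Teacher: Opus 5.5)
Your proof is correct and follows the paper's argument. Both parts rest on the observation that an odd \(\fa\) can share a factor with \(\omega\) only through some \((\pi_j)\) of norm at least \(z=X^{1/8}\), and the ideal hypothesis is then deduced exactly as in the paper from Theorem~\ref{thm:switching} with \(\alpha=-1\) together with the case \(J=0\). The only difference is the order of summation: you fix \(\fa\) and get the saving \(1/N\fq\le X^{-1/8}\) from a lattice count of multiples of \(\fq\), whereas the paper sums over \(\omega\) first and gets \(1/N\fl\) from \(\Phi(\fl\fm)\ge\tfrac12 N\fl\,\Phi(\fm)\).
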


\begin{proof}
Each \(\omega\) has three odd prime ideal factors, all of norm
at least \(z\). For each such \(\fl\),
\[
 \sum_{\substack{N\fa\le Q\\\fl\mid\fa}}
       \frac{\tau(N\fa)^J}{\Phi(\fa)}
 \ll_J\frac1{N\fl}
       \sum_{N\fm\le Q}\frac{\tau(N\fm)^J}{\Phi(\fm)}
 \ll_J\frac{(\log X)^{C_J}}{N\fl}.
\]
Indeed, writing \(\fa=\fl\fm\) changes \(\Phi\) by a factor
at least \(N\fl/2\) and the divisor weight by a bounded
factor. The remaining sum is a harmonic divisor moment.
Summing first over \(\omega\), using \(|\WW_X|\ll X\),
proves \eqref{eq:coprimality-removal}.

For an odd ideal, \(\fa\mid(1+\omega)\) is the reduced
residue condition \(\omega\equiv-1\pmod\fa\).
Theorem~\ref{thm:switching} gives the main term
\(\#\{\omega:(\omega,\fa)=1\}/\Phi(\fa)\).
Equation~\eqref{eq:coprimality-removal} with \(J=0\)
replaces it by \(|\WW_X|/\Phi(\fa)\).
For even ideals the count and the stated main term both
vanish, since \(1+\omega\) is odd.
\end{proof}

\begin{proposition}\label{prop:second-distribution}
Define a multiplicative squarefree density \(g_B\) by
\[
 g_B(2)=0,\qquad g_B(\ell)=g(\ell)\quad(\ell\text{ odd}).
\]
For every \(A>0\) and integer \(J\ge0\), some \(B=B(A,J)\)
satisfies
\begin{equation}\label{eq:second-distribution}
 \sum_{d\le\sqrt X/(\log X)^B}\mu(d)^2\tau(d)^J
 \left|\#\{\omega\in\WW_X:d\mid N(1+\omega)\}
                -g_B(d)|\WW_X|\right|
 \ll_{A,J}\frac{X}{(\log X)^A}.
\end{equation}
\end{proposition}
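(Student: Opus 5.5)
We apply Theorem~\ref{thm:transfer} to the weight \(m_X(\alpha)=\one_{\alpha=1+\omega}\), \(\omega\in\WW_X\). Then \(A_X[d]\) is the count in \eqref{eq:second-distribution}, and we take \(Y_X=|\WW_X|\) and \(\gamma_X=\gamma_B\).

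First we verify the hypotheses.
\begin{itemize}
\item \emph{Support and multiplicity.} Each \(\omega\) has a unique representation, so \(\alpha\mapsto\omega\) is injective. Also \(N(1+\omega)\le(1+\sqrt{N\omega})^2\le 2X+O(1)\), so the support lies in \(N\alpha\le CX\) with \(C=3\). Hence \(m_X\le1\) and we may take \(M=0\).
\item \emph{Main term.} We have \(Y_X=|\WW_X|\ll X\). Moreover \(\gamma_B(\fa)\le 1/\Phi(\fa)\ll\tau(N\fa)^2/N\fa\), using \(N\fa/\Phi(\fa)\le\tau(N\fa)^2\) as in \eqref{eq:elementary-ideal-sums}. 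So \(B_0=2\).
\item \emph{Ideal hypothesis.} This is exactly the content of Lemma~\ref{lem:coprimality}. For odd \(\fa\) with \(N\fa\le Q\), the residue condition \(\omega\equiv-1\pmod\fa\) is reduced. The single discrepancy is therefore bounded by the maximum in \eqref{eq:switched-ideal}. Replacing the coprime count by \(|\WW_X|\) costs at most
\[
\Phi(\fa)^{-1}\#\{\omega:(\omega,\fa)\ne1\},
\]
and summing this over \(\fa\) gives \(O(X^{7/8}(\log X)^{C_0})\) by \eqref{eq:coprimality-removal}. Even ideals contribute \(R_B=0\). This yields \eqref{eq:ideal-hypothesis} with the \(B(A)\) of Theorem~\ref{thm:switching}.
\end{itemize}

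Theorem~\ref{thm:transfer} now gives \eqref{eq:transfer-conclusion} with main term \(|\WW_X|g_X(d)\), where
\[
g_X(d)=\mu(d)\sum_{d(\fa)=d}\mu_\OO(\fa)\gamma_B(\fa).
\]
It remains to identify \(g_X\) with \(g_B\). Both \(\gamma_B\) and the condition \(d(\fa)=d\) factor over the rational primes dividing \(d\), since \(\Phi\) is multiplicative on coprime ideals. So \(g_X\) is multiplicative on squarefree \(d\), and we compute it prime by prime.
\begin{itemize}
\item At \(\ell=2\), the only ideal is \((1+i)\), which is even, so \(g_X(2)=0\).
\item At split \(\ell\equiv1\pmod4\), the ideals are \(\fl\), \(\overline\fl\) and \((\ell)\). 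This gives
\[
-\Bigl(-\tfrac{2}{\ell-1}+\tfrac1{(\ell-1)^2}\Bigr)=\tfrac2{\ell-1}-\tfrac1{(\ell-1)^2}.
\]
\item At inert \(\ell\equiv3\pmod4\), the single ideal \((\ell)\) gives \(-(-1/(\ell^2-1))=1/(\ell^2-1)\).
\end{itemize}
These agree with \(g_B(\ell)=g(\ell)\) from \eqref{eq:density}.

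The only point needing care is whether the uniformity in Theorem~\ref{thm:switching} suffices for \eqref{eq:ideal-hypothesis}. It does, since that theorem takes the maximum over reduced classes and the class \(-1\) is one of them. The remaining work is bookkeeping of the logarithmic exponents. After these checks the proposition follows directly.
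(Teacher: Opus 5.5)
Your proposal is correct and follows the paper's own proof. You apply Theorem~\ref{thm:transfer} to the indicator of \(1+\WW_X\) with \(M=0\) and \(B_0=2\), take the ideal hypothesis from Lemma~\ref{lem:coprimality} (which you essentially re-derive from Theorem~\ref{thm:switching} and \eqref{eq:coprimality-removal}), and identify \(g_X=g_B\) prime by prime through \eqref{eq:exact-transfer}, exactly as the paper does.
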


\begin{proof}
Apply Theorem~\ref{thm:transfer} with
\(m_X(\beta)=\one_{\beta-1\in\WW_X}\).
This is an indicator on nonzero Gaussian integers with
\(N\beta\ll X\). Lemma~\ref{lem:coprimality} supplies the
ideal distribution estimate, and
\(\gamma_B(\fa)\ll\tau(N\fa)^2/N\fa\).
The split, inert, and ramified local terms in
\eqref{eq:exact-transfer} give precisely the stated density.
\end{proof}

\section{The weighted sieve and the \texorpdfstring{\(P_2\)}{P2} theorem}

Let \(\SeqA_X\) be the multiset
\[
 \SeqA_X=\{N((\pi-1)/b):\pi\in\PP,\ N\pi\le X\}.
\]
Its members are at most \(X\) for sufficiently large \(X\).
It counts each rational prime \(p\le X\), \(p\equiv1\pmod4\), twice,
through its two primary Frobenius eigenvalues.
For a multiset \(\mathcal C\) of positive integers, define
\[
 P(t)=\prod_{\substack{\ell<t\\\ell\ {\rm prime}}}\ell,\qquad
 S(\mathcal C,t)=\#\{n\in\mathcal C:(n,P(t))=1\},
\]
and let \(\mathcal C_d\) be its submultiset of multiples of \(d\).

\subsection{Sieve densities and the Chen weight}

For the density \(g\) in \eqref{eq:density}, put
\[
 V(t)=\prod_{\ell<t}(1-g(\ell)).
\]
For split odd primes \(g(\ell)=2/\ell+O(\ell^{-2})\), and
for inert primes \(g(\ell)=O(\ell^{-2})\).
The quantitative prime number theorem in the two reduced residue classes
modulo \(4\) gives the dimension-one conditions
\begin{equation}\label{eq:sieve-conditions}
 0\le g(\ell)\le\frac12,\qquad
 \sum_{w\le\ell<t}g(\ell)\log\ell
       =\log(t/w)+O(1)\quad(2\le w<t),
\end{equation}
and
\begin{equation}\label{eq:sieve-product}
 V(t)\sim\frac{C_0}{\log t}\qquad(C_0>0).
\end{equation}
The density \(g_B\) satisfies \eqref{eq:sieve-conditions}.
Its product \(V_B(t)=\prod_{\ell<t}(1-g_B(\ell))\) equals
\(2V(t)\) for \(t>2\), and hence \(V_B(t)\sim2C_0/\log t\).

We use the dimension-one linear sieve of Jurkat and Richert
\cite{JurkatRichert}, in the form of
\cite[Theorems~11.12--11.13]{Richert1976}.
The relevant sieve functions are
\begin{equation}\label{eq:sieve-functions}
 f(s)=\frac{2e^\gamma\log(s-1)}s\quad(2\le s\le4),
 \qquad F(s)=\frac{2e^\gamma}s\quad(1\le s\le3),
\end{equation}
where \(\gamma\) is Euler's constant; see
\cite[(11.73)--(11.74)]{Richert1976}.
The upper estimate includes the remainder
\(\sum_{d<D,\ d\mid P(t)}3^{\Omega(d)}|R_d|\).
The estimates \eqref{eq:first-distribution} and
\eqref{eq:second-distribution} include all the fixed divisor
weights required below.

For a squarefree integer \(n\le X\) with \((n,P(z))=1\),
let \(\nu_y(n)\) be the number of prime factors in \([z,y]\),
and let \(\eta(n)\) indicate the pattern
\begin{equation}\label{eq:three-factor-pattern}
 n=q_1q_2q_3,\qquad z\le q_3\le y<q_2<q_1,
 \qquad q_1,q_2,q_3\text{ prime}.
\end{equation}
Chen's weight in this setting is
\[
 w(n)=1-\frac12\nu_y(n)-\frac12\eta(n).
\]
If \(\Omega(n)\ge4\), at least two factors are at most \(y\),
since three factors exceeding \(y=X^{1/3}\) already have
product greater than \(X\). Thus \(w(n)\le0\).
If \(\Omega(n)=3\), either two factors are at most \(y\),
or the pattern \eqref{eq:three-factor-pattern} holds; again
\(w(n)\le0\). For \(\Omega(n)\le2\), one has \(0\le w(n)\le1\).
Positive weights therefore detect squarefree \(P_2\)'s.

Removing nonsquarefree \(z\)-rough elements of \(\SeqA_X\)
costs
\begin{equation}\label{eq:squareful-error}
 \ll\sum_{\substack{\ell\ge z\\\ell\ {\rm prime}}}
           \sum_{m\le X/\ell^2}r_2(\ell^2m)
 \ll X\log X\sum_{\ell\ge z}\ell^{-2}
 \ll \frac{X\log X}{z}.
\end{equation}
For the middle bound, use \(r_2(n)\le4\tau(n)\) and
\(\tau(\ell^2m)\le3\tau(m)\).
A \(z\)-rough integer at most \(X\) has at most eight prime
factors, counted with multiplicity. In
\(W_1-\tfrac12W_2-\tfrac12W_3\) below, each removed occurrence
therefore has bounded coefficient: it contributes once to \(W_1\),
at most eight times to \(W_2\), and never to \(W_3\).

Set
\[
 W_1=S(\SeqA_X,z),\qquad
 W_2=\sum_{\substack{z\le q\le y\\q\ {\rm prime}}}
                      S((\SeqA_X)_q,z),
\]
and let \(W_3\) count the elements of \(\SeqA_X\) having
the pattern \eqref{eq:three-factor-pattern}.
If \(G(X)\) is the number of squarefree \(z\)-rough \(P_2\)'s
in \(\SeqA_X\), then
\begin{equation}\label{eq:chen-inequality}
 G(X)\ge W_1-\frac12W_2-\frac12W_3
                -O(X\log X/z).
\end{equation}
This is the weighted decomposition used in
\cite[\S\S2--5]{IwaniecJimenezCM}.

\subsection{The first two sieve terms}

Choose \(D=\sqrt X/(\log X)^B\), with \(B\) large enough
for the two distribution estimates and the required divisor
weights.
Proposition~\ref{prop:first-distribution} and the lower
linear sieve \cite[Theorem~11.13, (11.84)]{Richert1976}
give
\begin{equation}\label{eq:W1}
 W_1\ge
 \left(\frac{e^\gamma}{2}\log3-o(1)\right)\Li(X)V(z).
\end{equation}
To match the main-mass parameter in the cited theorem, put
\[
 Y=\Li(X),\qquad
 D_Y=\frac{\sqrt Y}{(\log Y)^{A_3}}\le D
\]
with \(A_3\) sufficiently large.
Equation~\eqref{eq:first-distribution}, with a sufficiently
strong logarithmic saving, gives the required averaged
remainder bound at this cutoff. The argument of \(f\) in the
cited theorem is
\(\tfrac12\log Y/\log z=4-o(1)\).

For \(z\le q\le y\), apply the upper sieve at level \(D/q\)
to \((\SeqA_X)_q\).
Since \(q\) does not divide \(P(z)\), the main term is
\(\Li(X)g(q)g(d)\) for \(d\mid P(z)\).
The total remainder over \(q\) and \(d\) is bounded by
\eqref{eq:first-distribution}: with \(e=qd\), its
coefficient is at most \(\tau(e)^3\).
The upper parameter is
\[
 s_q=\frac{\log(D/q)}{\log z}
       =4-\frac{8\log q}{\log X}-o(1).
\]
The error is uniform for \(z\le q\le y\), and \(s_q\in[1,3]\)
throughout this range for sufficiently large \(X\).
The upper estimate in
\cite[Theorem~11.12, (11.79), (11.81)]{Richert1976}
therefore applies throughout this range.
Partial summation over split primes, whose density is
\(1/2\), gives
\begin{align}
 W_2
 &\le\left(2e^\gamma
       \int_{1/8}^{1/3}\frac{dt}{t(4-8t)}+o(1)\right)
                   \Li(X)V(z)\notag\\
 &=\left(\frac{e^\gamma}{2}\log6+o(1)\right)\Li(X)V(z).
 \label{eq:W2}
\end{align}
Here \(g(q)=2/q+O(q^{-2})\) on split primes.
The inert contribution is negligible since
\(\sum_{q\ge z,\ q\equiv3\ (4)}g(q)\ll z^{-1}\).

\subsection{The switched sieve term}

An inert prime occurs to even valuation in a Gaussian norm.
Since the pattern \eqref{eq:three-factor-pattern} is
squarefree and \(z>2\), its three rational primes split
in \(\mathbf Z[i]\).
For each element counted by \(W_3\), factor its Gaussian
representative to write
\[
 \pi=1+\omega,\qquad
 \omega=b\varepsilon\pi_1\pi_2\pi_3.
\]
Unique factorization, the primary convention, and the strict
order of the norms determine the representation uniquely.
If \(N\pi\le X\), then
\[
 N\omega\le(\sqrt X+1)^2.
\]
Replacing this cutoff by \(N\omega\le X\) omits
\(O(\sqrt X)\) elements, since the annulus has bounded
radial thickness. The endpoint \(N\pi_3=y\) is empty
under either cutoff for large \(X\), because it would
force \(N\omega>8y^3=8X\).
It follows that
\begin{equation}\label{eq:W3-primes}
 W_3\le\#\{\omega\in\WW_X:N(1+\omega)\text{ prime}\}
                     +O(\sqrt X).
\end{equation}
For all \(\omega\in\WW_X\),
\[
 N\omega\ge8zy^2=8X^{19/24},\qquad
 N(1+\omega)\gg X^{19/24}.
\]
Thus every prime value in \eqref{eq:W3-primes} survives
sifting at \(z_0=X^{1/4}\).

Let \(\BB_X\) be the multiset of \(N(1+\omega)\) with
\(\omega\in\WW_X\). Its sifting product satisfies
\[
 V_B(z_0)=2V(z_0)=(1+o(1))V(z)
\]
by \eqref{eq:sieve-product}.
Apply Proposition~\ref{prop:second-distribution} and the
upper sieve with parameter \(\log D/\log z_0=2-o(1)\).
The same upper-sieve theorem used for \(W_2\) yields
\begin{equation}\label{eq:W3}
 W_3\le(e^\gamma+o(1))V(z)|\WW_X|
                     +O_A(X/(\log X)^A).
\end{equation}

To estimate \(|\WW_X|\), count the four units and the
ordered primary prime factors. Dropping the lower bound
\(N\pi_1>N\pi_2\) in the innermost count gives
\[
 |\WW_X|\le
 4\sum_{\substack{\pi_3\in\PP\\z\le N\pi_3<y}}
   \sum_{\substack{\pi_2\in\PP\\
              y<N\pi_2<\sqrt{X/(8N\pi_3)}}}
       \Pi\left(\frac{X}{8N\pi_2N\pi_3}\right).
\]
All nonempty inner arguments exceed \(y\), so the asymptotic
\(\Pi(T)\sim\Li(T)\) is uniform here.
Two partial summations, with
\(u=\log N\pi_2/\log X\) and \(v=\log N\pi_3/\log X\),
give
\begin{equation}\label{eq:switched-mass}
 |\WW_X|\le\left(\frac c2+o(1)\right)\Li(X),\qquad
 c=\int_{1/8}^{1/3}\int_{1/3}^{(1-v)/2}
                       \frac{du\,dv}{uv(1-u-v)}.
\end{equation}
The fixed factor \(8\) perturbs the integration boundaries
by \(O(1/\log X)\). The factor \(1/2\) in the main term
is the four units divided by \(Nb=8\); \(\Pi\) already
counts both conjugates.
Integrating in \(u\) gives
\begin{equation}\label{eq:c-integral}
 c=\int_{1/8}^{1/3}\frac{\log(2-3v)}{v(1-v)}\,dv.
\end{equation}

We use the classical bound in \cite[(10.79)]{Richert1976}.
For completeness, \(\log x\le(x-x^{-1})/2\) for \(x\ge1\).
Taking \(x=2-3v\) gives
\[
 \frac{\log(2-3v)}{v(1-v)}
 \le\frac34\left(\frac1v-\frac3{2-3v}\right).
\]
Consequently
\begin{equation}\label{eq:positive-constant}
 c\le\frac34\log\frac{64}{39}<\log\frac32.
\end{equation}
The strict inequality is equivalent to
\(16\cdot64^3<81\cdot39^3\).

\subsection{Completion of the proof}

\begin{proof}[Proof of Theorem~\ref{thm:main}]
Combining \eqref{eq:chen-inequality}, \eqref{eq:W1},
\eqref{eq:W2}, \eqref{eq:W3}, and
\eqref{eq:switched-mass} gives
\[
 G(X)\ge
 \left(\frac{e^\gamma}{4}
              \left(\log\frac32-c\right)-o(1)\right)
                   \Li(X)V(z)
 \gg\frac{X}{(\log X)^2}
\]
by \eqref{eq:positive-constant} and
\eqref{eq:sieve-product}.
The error in \eqref{eq:squareful-error} is smaller than
this lower bound. Each rational prime contributes two
primary Gaussian primes, so division by two gives the
asserted quantitative lower bound.

All counted \(n_p\) are \(z\)-rough, hence odd.
For each counted prime, choose a primary Frobenius eigenvalue
\(\pi=a+vi\).
The primary congruence implies
\[
 a\text{ odd},\qquad v\text{ even},\qquad a+v\equiv1\pmod4.
\]
By \eqref{eq:cm-norm},
\[
 n_p=\frac{(a-1)^2+v^2}{8}.
\]
If \(v\equiv0\pmod4\), both squares in the numerator
vanish modulo \(16\), so \(n_p\) is even.
If \(v\equiv2\pmod4\), the numerator is \(8\pmod{16}\),
so \(n_p\) is odd. Thus oddness is equivalent to
\(v\equiv2\pmod4\), equivalently
\(p=a^2+v^2\equiv5\pmod8\).
The roughness assertion is part of the definition of \(G(X)\).
\end{proof}

\begin{corollary}\label{cor:group}
For the primes counted in Theorem~\ref{thm:main}, every
prime divisor of \(n_p\) is congruent to \(1\pmod4\), and
\[
 E(\mathbf F_p)\simeq
       \mathbf Z/2\mathbf Z\times\mathbf Z/(4n_p)\mathbf Z.
\]
\end{corollary}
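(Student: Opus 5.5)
Two facts drive the argument: \(n_p\) is odd and squarefree with \(p\equiv5\pmod8\), and \(n_p\) is a norm from \(\mathbf Z[i]\). The group structure then follows from the CM structure of \(E(\mathbf F_p)\) as an \(\OO\)-module.

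\emph{Prime divisors.} Fix a primary Frobenius eigenvalue \(\pi\) with \(N\pi=p\), and put \(\beta=(\pi-1)/b\in\OO\). By \eqref{eq:cm-norm}, \(n_p=N\beta\). An inert prime \(\ell\equiv3\pmod4\) occurs to even valuation in any Gaussian norm. Since \(n_p\) is squarefree, no such \(\ell\) divides it. Since \(n_p\) is odd, \(2\nmid n_p\). Hence every prime divisor of \(n_p\) is \(\equiv1\pmod4\). (Alternatively, every prime divisor is at least \(X^{1/8}\) and is a split prime, by the same valuation argument.)

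\emph{Group structure.} Reduction of the CM endomorphisms identifies \(E(\mathbf F_p)\) with the \(\OO\)-module \(\ker(\pi-1)\). Since \(\OO\) acts through the Frobenius, \(E(\overline{\mathbf F}_p)[\fa]\simeq\OO/\fa\) for ideals \(\fa\) prime to \(p\). Thus
\[
 E(\mathbf F_p)\simeq\OO/(\pi-1)\OO=\OO/(b\beta).
\]
Now \(b\beta\) and \((1+i)^3\beta\) generate the same ideal, and \(\beta\) is coprime to \(1+i\) because \(N\beta\) is odd. The Chinese remainder theorem therefore gives
\[
 E(\mathbf F_p)\simeq\OO/(1+i)^3\times\OO/(\beta).
\]
\begin{itemize}
\item \(\OO/(\beta)\) is cyclic. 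Since \(\beta\) has squarefree norm, it has no rational integer factor \(>1\), so \(\beta\) is primitive and \(\OO/(\beta)\simeq\mathbf Z/n_p\mathbf Z\).
\item \(\OO/(1+i)^3=\OO/(2+2i)\) has order \(8\). It is annihilated by \(4\), since \((1+i)^3\mid4\), but not by \(2\), since \((1+i)^3\nmid2\). It is generated by \(1\) and \(i\) with \(2i\equiv-2\), so it is \(\mathbf Z/2\times\mathbf Z/4\).
\end{itemize}
Since \(n_p\) is odd, we may combine \(\mathbf Z/4\times\mathbf Z/n_p\simeq\mathbf Z/(4n_p)\), which gives
\[
 E(\mathbf F_p)\simeq\mathbf Z/2\mathbf Z\times\mathbf Z/(4n_p)\mathbf Z.
\]

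\emph{Where care is needed.} The one delicate point is justifying \(E(\mathbf F_p)\simeq\OO/(\pi-1)\) as an \(\OO\)-module. If we prefer not to cite this, an elementary route is available. First, \(E(\mathbf F_p)\) has full \(2\)-torsion, because \(x^3-x\) splits over \(\mathbf F_p\). Second, for an odd prime \(\ell\mid n_p\), the \(\ell\)-part is cyclic, since \(\ell^2\nmid\#E\). It then remains to show the \(2\)-part of order \(8\) is \(\mathbf Z/2\times\mathbf Z/4\) and not \((\mathbf Z/2)^3\). This holds because \(E[2]\simeq(\mathbf Z/2)^2\) for odd \(p\), and \(E[2]\) has rank \(2\), so the \(2\)-part cannot have three cyclic factors.
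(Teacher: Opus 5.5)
Your proof is correct. The prime-divisor part matches the paper: $n_p$ is an odd squarefree Gaussian norm, so neither $2$ nor any inert prime divides it.

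For the group structure, your main route differs from the paper's. You use the $\OO$-module isomorphism $E(\mathbf F_p)\simeq\OO/(\pi-1)$. The Chinese remainder theorem then splits off $\OO/(1+i)^3\simeq\mathbf Z/2\times\mathbf Z/4$ and the cyclic factor $\OO/(\beta)\simeq\mathbf Z/n_p\mathbf Z$. The paper instead writes $E(\mathbf F_p)\simeq\mathbf Z/d\times\mathbf Z/e$ with $d\mid e$. It gets $2\mid d$ from the rational $2$-torsion and concludes $d=2$ from $d^2\mid 8n_p$. That is exactly your elementary fallback, organized by invariant factors rather than prime by prime.

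The CM route needs a genuine external input. The reduction has endomorphism ring $\OO$, the $p$-power Frobenius is the element $\pi$ (this is what \eqref{eq:cm-norm} encodes), and $E[\fa]$ is free of rank one over $\OO/\fa$ for $\fa$ prime to $p$. Together these give Lenstra's theorem on the module structure of $E(\mathbf F_q)$. Your phrase ``since $\OO$ acts through the Frobenius'' does not by itself justify this, so you should cite it. In return the CM route is more informative. It shows the conclusion only needs $\beta$ to be primitive with odd norm, not $n_p$ squarefree, and it explains the factor $\mathbf Z/2\times\mathbf Z/4$ as $\OO/(1+i)^3$. The paper's argument is shorter and uses only the general two-generator structure of $E(\mathbf F_p)$.
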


\begin{proof}
The integer \(n_p\) is an odd squarefree Gaussian norm,
so every prime factor splits in \(\mathbf Z[i]\).
For the group structure, write
\(E(\mathbf F_p)\simeq\mathbf Z/d\mathbf Z\times
\mathbf Z/e\mathbf Z\), with \(d\mid e\);
see \cite[p.~816]{IwaniecJimenezCM}.
The four points of \(E[2]\) are rational, hence \(2\mid d\).
Since \(d^2\mid8n_p\) and \(n_p\) is odd and squarefree,
one has \(d=2\) and \(e=4n_p\).
\end{proof}

\newpage

\section*{Declaration}
GPT was used in generating proofs.

\bibliographystyle{amsplain}
\bibliography{Gaussian_Norms_CM_Almost_Prime_Orders}

\end{document}